\newtheorem{theorem}{Theorem}
\newtheorem{proposition}{Proposition}
\newtheorem{lemma}{Lemma}
\newtheorem{definition}{\sc{Definition}}
\title[Translation Hypersurfaces]{Translation Hypersurfaces with Constant $S_r$
Curvature in the Euclidean Space}
\author[B. P. Lima]{B. P. Lima$^1$}
\author[N. L. Santos]{N. L. Santos$^2$}
\author[J. P. Silva]{J. P. Silva$^3$}
\author[P. Sousa]{P. Sousa$^4$}
\address{Universidade Federal do Piau\'{i}; Departamento de Matem\'{a}tica; 64049-550,
Ininga - Teresina - PI; Brazil}
\email{barnabe@ufpi.edu.br$^1$}
\email{newtonls@ufpi.edu.br$^2$}
\email{jsilva@ufpi.edu.br$^3$}
\email{paulosousa@ufpi.edu.br$^4$}
\subjclass[2010]{Primary 53C42; Secondary 53A07, 53B20}
\keywords{Translation Hypersurfaces, $S_r$ Curvature}
\thanks{The first, second and third author are partially supported by CAPES/Brazil. The fourth author is partially supported by CAPES and CNPq/Brazil}
\begin{document}
\maketitle

\begin{abstract}
In this paper, we give a complete description of all translation hypersurfaces with constant $r$-curvature $S_r$,
in the Euclidean space.
\end{abstract}

\section{Introduction and Statement of Results}

In the Euclidean space $\Bbb{R}^{3}$, a surface $M^{2}$ is called a translation
surface if it is given by an immersion
\[
\Psi:U\subset\Bbb{R}^{2}\to\Bbb{R}^{3}:\, \Psi(x,y)=(x,y,z(x,y))
\]
where $z(x,y)=f(x)+g(y)$, for $f$ and $g$ smooth functions of a single variable,
that is, $\Psi$ is obtained as an Euclidean translation of the smooth curve,
$\alpha(x)=(x,0,f(x))$, pointwisely along the curve $\beta (y)=(0,y,g(y))$.
Scherk \cite{Scherk} proved in 1835 that, besides the planes, the only minimal
translation surfaces are the surfaces given by
\[
z(x,y)=\frac{1}{a}\ln\left|\frac{\cos(ay)}{\cos(ax)}\right|
\]
where $a$ is a nonzero constant. This surface, unique up to similarities, is
called Scherk's surface. The concept of translation surfaces was generalized
to hypersurfaces of $\Bbb{R}^{n+1}$ by Dillen, Verstraelen and Zafindratafa
\cite{Dillen}, they obtained a classification of minimal translation
hypersurfaces of the $(n+1)$-dimensional Euclidean space.

\begin{definition}
We say that a hypersurface $M^{n}$ of the Euclidean space $\Bbb{R}^{n+1}$ is a
translation hypersurface if it is the graph of a function given by
\[
 F(x_{1},\ldots,x_{n})=f_1(x_{1})+\ldots+f_{n}(x_{n})
\]
where $(x_{1},\ldots,x_{n})$ are cartesian coordinates and $f_{i}$ is a smooth
function of one real variable for $i=1,\ldots,n$.
\end{definition}

That is, $M^n$ can be thought as a composition of plane curves given by graphs,
that is: denote by $\alpha_i (t_i)=t_ie_i+f_i(t_i)e_{n+1}$, for $i=1,\ldots ,n$,
a family of plane curves, actually, graphs. For $p\in \mathbb R^{n+1}$ denote by
$L_p:\mathbb R^{n+1}\to \mathbb R^{n+1}$, the translation through $p$, given by
$L_p(q)=p+q$. Then, the map $\psi$ above is given by $$(x_1,\ldots x_n)\mapsto
L_{\alpha_1(x_1)}\circ \ldots \circ L_{\alpha_{n-1}(x_{n-1})}(\alpha_n(x_n)).$$

Another extension was obtained by R. L\'{o}pez, in \cite{Lopez}, where it is
introduced the concept of translation surfaces in the $3$-dimensional hyperbolic
space and it is presented a classification of minimal translation surfaces.

In a different aspect, Liu \cite{Liu} considered the translation surfaces with
constant mean curvature in $3$-dimensional Euclidean space and Lorentz-Minkowski
space. A classification of translation hypersurfaces with constant mean
curvature in $(n+1)$-dimensional Euclidean space was made by Chen, Sun and Tang
\cite{CST}.

Now, let $M^{n}\subset\mathbb{R}^{n+1}$ be an oriented hypersurface
and $\lambda_{1},\ldots,\lambda_{n}$ denote the principal curvatures of $M^{n}$.
We can consider similar problems related with the $r$-th elementary symmetric
polynomials, $S_{r}$, given by $S_{r}=\sum\lambda_{i_{1}}\cdots\lambda_{i_{r}}$,
where $r=1,\ldots,n$ and $1\leq i_{1}<\cdots<i_{r}\leq n$. In particular, $S_1$
is the mean curvature, $S_2$ the scalar curvature and $S_n$ the Gauss-Kronecker
curvature, up to normalization factors. A very useful relationship involving the
various $S_r$ is given by the next proposition, (Proposition 1, from
\cite{Caminha}).  This result will play a central role along this paper.

\begin{proposition}[Caminha, 2006  {\cite{Caminha}}] \label{Caminha}
Let $n>1$ be an integer, and $\lambda_1, \ldots , \lambda_n$ be real numbers.
Define, for $0\leq r\leq n$, $S_r=S_r(\lambda_1,\ldots, \lambda_n)$ as above, and set
$H_r=H_r(\lambda_1,\ldots, \lambda_n)=\large\binom{n}{r}^{-1}S_r(\lambda_1,\ldots, \lambda_n)$

\vspace{.2cm}

\noindent $(a)$ For $1\leq r\leq n$, one has $H_r^2\ge H_{r-1}H_{r+1}$.
Moreover, if equality happens for $r=1$ or for some $1<r<n$, with $H_{r+1}\neq
0$ in this case, then $\lambda_1=\ldots =\lambda_n $.

\vspace{.2cm}

\noindent $(b)$ If $H_1,H_2, \ldots H_r>0$ for some $1<r\leq n$, then $$H_1\ge
\sqrt{H_2}\ge \sqrt[3]{H_3}\ge \cdots \ge \sqrt[r]{H_r}\,.$$ Moreover, if
equality happens for some $1\leq j< r $, then $\lambda_1=\ldots =\lambda_n$.

\vspace{.2cm}

\noindent $(c)$ If for some $1\leq r<n$, one has $H_r=H_{r+1}=0$, then $H_j=0$
for all $r\leq j \leq n$. In particular, at most $r-1$ of the $\lambda_i$ are
different from zero.
\end{proposition}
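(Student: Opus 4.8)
The plan is to recognize all three parts as instances of the classical Newton--Maclaurin theory and to prove them through the single real-rooted polynomial
\[
P(t)=\prod_{i=1}^{n}(1+\lambda_i t)=\sum_{k=0}^{n}S_k\,t^{k}=\sum_{k=0}^{n}\binom{n}{k}H_k\,t^{k},
\]
whose roots are exactly the real numbers $-1/\lambda_i$ (ranging over the nonzero $\lambda_i$) and are therefore all real. Two operations preserve real-rootedness: differentiation $P\mapsto P'$, by Rolle's theorem together with multiplicity counting, and reversal $P\mapsto t^{\deg P}P(1/t)$. Both act transparently on the coefficient sequence $(H_k)$, and by composing them I can slide any three consecutive coefficients $H_{r-1},H_r,H_{r+1}$ into the three coefficient slots of a real-rooted \emph{quadratic}.

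For part $(a)$ I would carry out precisely this reduction: differentiate $P$ a total of $r-1$ times, reverse, and differentiate $n-r-1$ more times, arriving at a real quadratic $At^2+Bt+C$ whose coefficients are proportional, through fixed positive binomial factors, to $H_{r+1},H_r,H_{r-1}$. Real-rootedness of this quadratic is the assertion that its discriminant is nonnegative, which unwinds to $H_r^2\ge H_{r-1}H_{r+1}$ (the case $r=n$ being trivial since $H_{n+1}=0$). Equality forces the quadratic to be a perfect square, hence to carry a double root; tracing that double root upward through each Rolle step—where a critical point strictly between two distinct roots cannot itself be a repeated root unless those roots coincide—forces $\lambda_1=\cdots=\lambda_n$. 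The side condition $H_{r+1}\neq 0$ in the equality clause is exactly what excludes the degenerate windows in which the reduced quadratic drops in degree. I expect this equality analysis to be the main obstacle, since it requires bookkeeping of root multiplicities along the entire descent.

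Part $(b)$ I would deduce from $(a)$ by a purely multiplicative argument. Setting $H_0=1$, the inequalities $H_k^2\ge H_{k-1}H_{k+1}$ say that the ratios $q_k:=H_k/H_{k-1}$ are nonincreasing while the $H_k$ stay positive; writing $H_k=q_1q_2\cdots q_k$ exhibits $H_k^{1/k}$ as the geometric mean of a nonincreasing sequence, so adjoining the smaller factor $q_{k+1}$ can only lower the mean, giving $H_k^{1/k}\ge H_{k+1}^{1/(k+1)}$. Chaining these produces the full string of inequalities, and equality at any single link forces all the $q_j$ involved to coincide, hence equality in one of the Newton inequalities of $(a)$, hence $\lambda_1=\cdots=\lambda_n$.

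Part $(c)$ I would prove by contradiction, again from real-rootedness. If $H_r=H_{r+1}=0$ yet $H_j\neq 0$ for some $j\ge r+2$, let $i$ be the largest index below $r$ with $S_i\neq 0$ (which exists because $S_0=1$) and $j$ the smallest index above $r+1$ with $S_j\neq 0$; then $S_{i+1}=\cdots=S_{j-1}=0$ and the gap $d:=j-i$ satisfies $d\ge 3$. Differentiating $P$ enough times to erase the terms below degree $i$, then reversing and differentiating to erase the terms above degree $j$, yields a real-rooted binomial proportional to $a+b\,t^{d}$ with $d\ge 3$; but such a binomial has at most two real roots, a contradiction. Hence $S_j=0$ for every $j\ge r$, so the coefficient support of $P$ is an initial segment of length at most $r-1$, i.e. at most $r-1$ of the $\lambda_i$ are nonzero. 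Since the reductions in $(a)$ and $(c)$ rest on the same analysis of how differentiation and reversal act on $(H_k)$, once that behaviour is pinned down the remaining steps follow with comparatively little extra effort.
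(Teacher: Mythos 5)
There is nothing to compare against inside the paper itself: the authors state this proposition as a quoted result, attributed to Caminha's 2006 paper \cite{Caminha}, and give no proof of it. Your argument is therefore a self-contained substitute, and it is the classical Newton--Maclaurin proof: encode the $\lambda_i$ in the real-rooted polynomial $P(t)=\prod_i(1+\lambda_i t)=\sum_k\binom{n}{k}H_k t^k$, use that differentiation (Rolle plus multiplicity counting) and coefficient reversal preserve real-rootedness, and slide the window $(H_{r-1},H_r,H_{r+1})$ into a quadratic whose discriminant gives $(a)$. Your part $(b)$ via the nonincreasing ratios $q_k=H_k/H_{k-1}$ and the geometric-mean observation is correct, including the equality case (equality at a link forces $q_1=\cdots=q_{j+1}$, hence equality in a Newton inequality, hence all $\lambda_i$ equal by $(a)$); and your part $(c)$ gap argument is a genuinely nice touch: if $S_i\neq0$, $S_{i+1}=\cdots=S_{j-1}=0$, $S_j\neq0$ with $d=j-i\geq3$, the differentiate--reverse--differentiate reduction produces a real-rooted binomial $a+bt^d$ with $a,b\neq0$, which has at most two real roots (all simple, since its derivative vanishes only at $0$), a contradiction; this correctly yields $S_j=0$ for all $j\geq r$ and hence $\deg P\leq r-1$, i.e.\ at most $r-1$ nonzero $\lambda_i$. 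The multiplicity bookkeeping you flag as the main obstacle in $(a)$ rests on exactly the lemma you state: for real-rooted $R$, the roots of $R'$ strictly between distinct roots of $R$ are simple, so any multiple root of $R'$ of multiplicity $m$ is a root of $R$ of multiplicity $m+1$; iterating this propagates the double root of the quadratic up to an $n$-fold root of $P$.

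One small slip worth correcting in a full write-up: in the orientation produced by your reduction, $H_{r+1}$ lands in the \emph{constant} slot of the quadratic and $H_{r-1}$ in the leading slot, so $H_{r+1}\neq0$ does not by itself prevent the degree drop --- what it directly buys is that the double root is nonzero, so it survives un-reversal. The degenerate window $H_{r-1}=0$ needs a separate (easy) exclusion: equality then forces $H_r=0$, so $Q^{(r-1)}$ has a root of multiplicity at least $2$ at the origin, and tracing that back through the same multiplicity lemma gives $P(0)=0$, contradicting $P(0)=1$. With that patch, and with reversal taken at the \emph{formal} degree (so that actual degree drops when some $\lambda_i=0$ merely contribute factors of $t$, harmless for real-rootedness), your proof is complete and correct.
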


\vspace{.3cm}

In \cite{MLeite}, M. L. Leite gave a new example of a translation
hypersurface of $\Bbb{R}^{4}$ with zero scalar curvature. And, in
\cite{LSS}, Lima, Santos and Sousa presented a complete description of all
translation hypersurfaces with zero scalar curvature in the Euclidean space
$\Bbb{R}^{n+1}$. In this paper, we obtain a complete classification of
translation hypersurfaces of $\Bbb{R}^{n+1}$ with $S_r=0$. We prove the following

\begin{theorem}\label{th1}
Let $M^{n}\,(n\geq3)$ be a translation hypersurface in $\Bbb{R}^{n+1}$. Then, for $2<r<n$,
$M^{n}$ has zero $S_r$ curvature if, and only if, it is congruent to
the graph of the following functions
\begin{itemize}
\item $F(x_{1},\ldots,x_{n})=\displaystyle\sum_{i=1}^{n-r+1}a_{i}x_{i}
+\sum_{j=n-r+2}^{n}f_{j}(x_{j})+b$,
\end{itemize} on $\Bbb R^{n-r+1}\times J_{n-r+2}\times\cdots\times
J_n$, for some intervals $J_{n-r+2},\ldots,J_{n}$, and arbitrary smooth functions $f_{i}:J_i\subset \Bbb{R}\to\Bbb{R}$. Which defines, after a suitable linear change of variables, a vertical cylinder, and
\begin{itemize}
\item A generalized periodic Enneper hypersurface given by
 \begin{eqnarray*}
F(x_{1},\ldots,x_{n})&=&\displaystyle\sum_{i=1}^{n-r-1}a_{i}x_{i}\\
&+&\sum_{k=n-r}^{n-1}\frac{\sqrt{\beta}}{a_k}\ln\left|\dfrac{\cos
\left(-\dfrac{a_{n-r}\ldots a_{n-1}}{\sigma_{r-1}(a_{n-r},\ldots ,
a_{n-1})}\sqrt{\beta}x_n+b_n\right)}{\cos(a_k\sqrt{\beta}x_k+b_k)}\right|+c
\end{eqnarray*}
\end{itemize}
on $\Bbb R^{n-r-1}\times I_{n-r}\times\cdots\times I_{n}$, where $a_1, \ldots,
a_{n-r},\ldots ,a_{n-1},b_{n-r},\ldots , b_{n}$ and $c$ are real constants
where $a_{n-r}, \ldots, a_{n-1}$ and $\sigma_{r-1}(a_{n-r}, \ldots, a_{n-1})$
nonzero, $\beta=1+\displaystyle \sum_{i=1}^{n-r-1}a_{i}^{2}$, $I_k\,
(n-r\leq k\leq n-1)$ are open intervals defined by the conditions
$|a_k\sqrt{\beta}x_k+b_k|<\pi/2$ while $I_n$ is defined by
$\left|-\dfrac{a_{n-r}\ldots a_{n-1}}{\sigma_{r-1}(a_{n-r},\ldots ,
a_{n-1})}\sqrt{\beta}x_n+b_n\right| < \pi/2$.
\end{theorem}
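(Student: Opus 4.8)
The plan is to translate the condition $S_r=0$ into a single functional identity in the one–variable functions $f_1,\dots,f_n$ and then to solve that identity by separation of variables, organised according to how many of the $f_i$ fail to be affine.

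First I would set $p_i=f_i'(x_i)$, $a_i=f_i''(x_i)$ and $W=\bigl(1+\sum_i p_i^{2}\bigr)^{1/2}$. For the graph of $F=\sum_i f_i$ the induced metric is $g=I+pp^{\mathsf T}$ (so $\det g=W^{2}$) and the second fundamental form is $W^{-1}\operatorname{diag}(a_1,\dots,a_n)$; consequently the numbers $W\lambda_i$ are the generalized eigenvalues of the pencil $(\operatorname{diag}(a_i),g)$. Expanding $\det(\operatorname{diag}(a_i)-\nu g)$ by the matrix–determinant lemma and reading off the coefficient of $\nu^{\,n-r}$ gives the identity
\[
W^{\,r+2}S_r=\sigma_r(a_1,\dots,a_n)+\sum_{i=1}^{n}p_i^{2}\,\sigma_r\bigl(a_1,\dots,\widehat{a_i},\dots,a_n\bigr),
\]
where $\sigma_r$ is the $r$-th elementary symmetric polynomial and a hat denotes omission; I would sanity–check this against the classical mean–curvature formula ($r=1$). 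Regrouping, $S_r=0$ is equivalent to
\[
\sum_{|I|=r}\Bigl(W^{2}-\textstyle\sum_{i\in I}p_i^{2}\Bigr)\prod_{j\in I}a_j=0, \qquad (\star)
\]
required to hold identically on the product domain.

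Next I would introduce $N=\{\,i:\ f_i\ \text{is not affine}\,\}$ and $m=|N|$. For $i\notin N$ one has $a_i\equiv 0$ and $p_i\equiv\text{const}$, so only subsets $I\subseteq N$ contribute to $(\star)$, and $\beta:=1+\sum_{i\notin N}p_i^{2}$ is a positive constant. The proof then branches on $m$. When $m\le r-1$ there are no $r$-subsets of $N$, so $(\star)$ holds for free: these are precisely the cylinders, with $r-1$ (or fewer) arbitrary functions and the rest affine. When $m=r$ the single surviving term forces $W^{2}-\sum_{i\in N}p_i^{2}=\beta=0$, impossible since $\beta\ge 1$; hence this case cannot occur.

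The Enneper family arises from $m=r+1$. Here I would divide $(\star)$ by $\prod_{j\in N}a_j\ne 0$ to obtain $\sum_{k\in N}(\beta+p_k^{2})/a_k=0$, a sum of functions of single, distinct variables; separation of variables yields $(\beta+p_k^{2})/a_k=c_k$ with $\sum_k c_k=0$, i.e. the separable ODE $c_k f_k''-(f_k')^{2}=\beta$. Integrating gives $f_k=\tfrac{\sqrt\beta}{a_k}\ln|\cos(\cdot)|$, and encoding $\sum_k c_k=0$ reproduces the coefficient $-\,a_{n-r}\cdots a_{n-1}/\sigma_{r-1}(a_{n-r},\dots,a_{n-1})$ in the $x_n$–argument of the generalized Enneper hypersurface.

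The crux, and the step I expect to be the main obstacle, is excluding $m\ge r+2$. Writing $s=m-r\ge 2$ and $g_i=1/a_i$, $h_i=p_i^{2}/a_i$, the divided form of $(\star)$ reads $\sum_{|J|=s}\bigl(\beta+\sum_{i\in J}p_i^{2}\bigr)\prod_{j\in J}g_j=0$. Applying the mixed derivative $\partial_{k_1}\!\cdots\partial_{k_s}$ along any $s$ distinct indices isolates one summand and forces $\beta+\sum_{i\in J}h_i'/g_i'=0$ for every $s$-subset $J$; comparing subsets shows each ratio $h_i'/g_i'$ equals the constant $-\beta/s$, which integrates to $p_i^{2}=-\beta/s+e_i a_i$. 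Positivity of $p_i^{2}$ rules out $e_i=0$, so every $e_i\ne 0$ (and each $f_i$ is non-quadratic, retroactively legitimising the divisions). Substituting back collapses $(\star)$ to $\sum_{i\in N}e_i\,\sigma_{s-1}\bigl(g_{N\setminus\{i\}}\bigr)=0$; one more round of mixed differentiation gives $\sum_{i\notin K}e_i=0$ for every $(s-1)$-subset $K$, which forces all $e_i$ equal and then $(r+1)\,e_i=0$, contradicting $e_i\ne 0$. The delicate points throughout are justifying the separation–of–variables steps and the divisions, i.e. handling the loci where some $a_i$, $a_i'$, or the relevant derivatives vanish, which I would do by first working on a dense open subset (or invoking analyticity) and then passing to the global statement by a connectedness/continuation argument.
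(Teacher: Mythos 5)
Your formula for $W^{r+2}S_r$ and your case split by the number $m$ of non-affine $f_i$ coincide with the paper's (its Proposition \ref{EqSr}, and Cases 1--3 of its proof of Theorem \ref{th1}; your $m=r+1$ separation-of-variables step is exactly its Lemma \ref{casoparticular}). Where you genuinely diverge is the exclusion of $m\ge r+2$: instead of the paper's dichotomy $A_l\neq0$ versus $A_l=0$ for the coefficients in $A_l\dddot f_l+2B_l\dot f_l\ddot f_l=0$, followed by repeated differentiation and an appeal to Caminha's proposition via $\sigma_r=\sigma_{r+1}=0$, you isolate single summands of the divided equation with mixed $s$-fold derivatives and reduce everything to linear relations among the integration constants $e_i$. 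In the regime where it applies, your argument is shorter and cleaner than the paper's.

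However, it has a genuine gap, and it sits exactly where the paper works hardest. Your divisions by $g_i'=-\dddot f_i/\ddot f_i^{\,2}$ require $\dddot f_i\neq0$ for every $i\in N$, and your parenthetical claim that non-quadraticity is ``retroactively legitimised'' is circular: the identity $p_i^2=-\beta/s+e_ia_i$ with $e_i\neq0$ was derived only \emph{after} dividing by the $g_i'$, so it cannot justify those divisions. If some $f_i$ is exactly quadratic on an interval ($\ddot f_i$ a nonzero constant), then $g_i'\equiv0$ there; this is a smooth, indeed analytic, configuration, so neither your dense-open-subset device nor analyticity removes it, and your mixed-derivative relations degenerate: for a subset $K\ni i$ the isolated equation collapses to $h_i'\prod_{j\in K\setminus\{i\}}g_j'=0$ with $h_i'=2\dot f_i$, which yields different and weaker information than the relation $\beta+\sum_{j\in K}h_j'/g_j'=0$ your comparison argument needs. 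This quadratic and partially quadratic stratum is precisely what the paper's Case 4 is mostly occupied with: both of its subcases funnel into configurations where the $\ddot f_l$ are constant, which are then excluded by extracting $\sigma_r(\ddot f_{k+1},\ldots,\ddot f_n)=\sigma_{r+1}(\ddot f_{k+1},\ldots,\ddot f_n)=0$ and invoking part $(c)$ of Proposition \ref{Caminha} (at most $r-1$ nonzero), contradicting $m\ge r+2$. Your plan is repairable --- for quadratic indices the $p_i$ are nonconstant linear functions, so $(\star)$ becomes a polynomial identity in them and comparing coefficients gives $\sigma_r=\sigma_{r+1}=0$ among the constant $\ddot f_i$, after which the mixed case (some indices quadratic, some not) still needs its own round of differentiation --- but as written the proposal does not close the hardest case and therefore does not prove the theorem.
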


Recently, Seo \cite{Seo} gave a classification of the translation hypersurfaces
with constant mean curvature or constant Gauss-Kronecker curvature in Euclidean
space. Particularly, he proved that if $M$ is a translation hypersurface with
constant Gauss-Kronecker curvature $GK$ in $\Bbb{R}^{n+1}$, then $M$ is congruent
to a cylinder, and hence $GK=0$. In \cite{LSS}, Lima, Santos and Sousa proved
that given any integer $n\geq3$, any translation hypersurface in $\Bbb{R}^{n+1}$ with constant
scalar curvature must have zero scalar curvature. In this work, we generalize
these results to the curvatures $S_r$. Precisely, we prove the following

\begin{theorem}\label{th2}
Any translation hypersurface in $\Bbb{R}^{n+1}\, (n\geq3)$ with $S_r$ constant,
for $2<r<n$, must have $S_r=0$.
\end{theorem}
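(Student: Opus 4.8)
The plan is to convert the condition $S_r\equiv\mathrm{const}$ into a single scalar identity in the $f_i$ and then play the separated structure off against the hypothesis $r>2$. Since $M^n$ is the graph of $F=f_1(x_1)+\cdots+f_n(x_n)$, its Hessian is diagonal, the induced metric is $g=I+vv^{\top}$ with $v=(f_1',\ldots,f_n')$, and the second fundamental form is $h=\tfrac1W\,\mathrm{diag}(f_1'',\ldots,f_n'')$ with $W=\big(1+\sum_i(f_i')^2\big)^{1/2}$. Writing $a_i=f_i''$, $p_i=f_i'$, I would compute the $S_r$ from the generating identity $\sum_r S_r t^r=\det(I+t\,g^{-1}h)=\det(g)^{-1}\det(g+th)$: applying the matrix determinant lemma to $g+th=M+vv^{\top}$, with $M=\mathrm{diag}\big(1+\tfrac tW a_i\big)$, and reading off the coefficient of $t^r$ gives
\[
S_r=\frac{1}{W^{\,r+2}}\Big(\sigma_r(a_1,\ldots,a_n)+\sum_{i=1}^n p_i^{2}\,\sigma_r^{(i)}\Big),
\]
where $\sigma_r^{(i)}$ is $\sigma_r$ of the variables $a_j$, $j\neq i$. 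Hence $S_r\equiv c$ is equivalent to the identity
\[
\sigma_r(a)+\sum_i p_i^2\,\sigma_r^{(i)}=c\,\big(1+\sum_i p_i^2\big)^{(r+2)/2},
\]
which I will call $(\star)$.

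Call $f_i$ \emph{affine} when $f_i''\equiv0$. If fewer than $r$ of the $f_i$ are non-affine, then every $r$-element set of indices meets an affine index, so each monomial of $\sigma_r(a)$ and of each $\sigma_r^{(i)}$ carries a factor vanishing identically; the left-hand side of $(\star)$ is then identically zero, and since $W>0$ this forces $c=0$. So I may assume that at least $r\,(\ge3)$ of the functions are non-affine, and in particular single out two of them, say $f_{n-1}$ and $f_n$.

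Freezing $x_1,\ldots,x_{n-1}$ and splitting $\sigma_r(a)$ and each $\sigma_r^{(i)}$ according to whether the index $n$ is used, $(\star)$ collapses to an ODE in $x_n$,
\[
C_0+C_1 f_n''+C_2 (f_n')^2=c\,\big(W_0^2+(f_n')^2\big)^{(r+2)/2},
\]
whose coefficients $C_0,C_1,C_2$ and $W_0^2=1+\sum_{i<n}(f_i')^2$ depend only on the frozen variables (note there is no $f_n''(f_n')^2$ cross term, since $\sigma_r$ is linear in each $a_i$). On any interval where $f_n''\neq0$, the quantity $f_n''$ is an intrinsic function $\mathcal A$ of $s:=f_n'$, so with $u=s^2$ the ODE reads $C_1\mathcal A=c\,(W_0^2+u)^{(r+2)/2}-C_2u-C_0$ with the \emph{same} $\mathcal A$ for every choice of the frozen variables. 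I would then vary $x_{n-1}$ (which is non-affine) to alter $W_0^2$, eliminate $\mathcal A$ between two such instances, and apply $\partial^2/\partial u^2$ to kill the terms affine in $u$; because $r>2$ this leaves
\[
c\,\tfrac{(r+2)r}{4}\big[\tilde C_1(W_0^2+u)^{(r-2)/2}-C_1(\tilde W_0^2+u)^{(r-2)/2}\big]=0 .
\]
If some frozen choice has $C_1=0$, the same differentiation applied to its own ODE gives $c\,(W_0^2+u)^{(r-2)/2}\equiv0$, hence $c=0$; otherwise two nearby choices with $C_1,\tilde C_1\neq0$ and $W_0^2\neq\tilde W_0^2$ would force $(W_0^2+u)^{(r-2)/2}$ and $(\tilde W_0^2+u)^{(r-2)/2}$ to be proportional, which is impossible for a nonzero exponent unless $W_0^2=\tilde W_0^2$. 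Either way $c=0$, which is the assertion.

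The genuinely delicate part is the bookkeeping around the reduction, not the final algebra: one must check that freezing all but $x_n$ really produces an ODE whose coefficients are constants built from the $\sigma$'s of the remaining $a_j$, and that the spare non-affine direction $x_{n-1}$ can move $W_0^2=1+\sum_{i<n}(f_i')^2$ through a nondegenerate interval while keeping $C_1\neq0$. Once that separation is secured, the decisive input is merely the strict non-linearity of $u\mapsto(W_0^2+u)^{(r+2)/2}$ for $r>2$: its second $u$-derivative is a nonzero multiple of $(W_0^2+u)^{(r-2)/2}$, which is exactly what the two differentiations exploit. This also pinpoints why the borderline exponent $r=2$ (the scalar-curvature case) falls outside the present argument and must be handled by the separate analysis of \cite{LSS}.
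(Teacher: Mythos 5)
Your argument is correct, but it takes a genuinely different route from the paper's. The paper proves Theorem \ref{th2} by differentiating $G_r=W^{r+2}S_r$ in $r+1$ distinct variables, combining the combinatorial identity (\ref{dGr}) with the explicit formula (\ref{d^dW^r}) for mixed derivatives of $W^{r+2}$; this forces a parity split according to whether $\prod_{j=1}^{r+1}(r+4-2j)$ vanishes ($r$ even) or not ($r$ odd), reduces the even case to the third-order equations $\dddot f_{l}=\alpha_{l}\dot f_{l}\ddot f_{l}$, and repeatedly invokes part $(c)$ of Proposition \ref{Caminha} to bound how many $\ddot f_j$ can be nonzero in the degenerate subcases. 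You instead freeze all variables but $x_n$, turning $S_r\equiv c$ into a family of identities $C_0+C_1\mathcal{A}(s)+C_2s^2=c\,(W_0^2+s^2)^{(r+2)/2}$ in $s=\dot f_n$, with one unknown profile $\mathcal{A}=\ddot f_n\circ(\dot f_n)^{-1}$ that is the same for all frozen data; eliminating $\mathcal{A}$ between two frozen choices (or exploiting a choice with $C_1=0$ directly) and differentiating twice in $u=s^2$ reduces everything to the impossibility of $(W_0^2+u)^{(r-2)/2}$ and $(\tilde W_0^2+u)^{(r-2)/2}$ being proportional on an interval when $W_0^2\neq\tilde W_0^2$ and $r>2$. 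I checked the bookkeeping you flagged: splitting off the index $n$ indeed gives $C_0=\sigma_r(a')+\sum_{i<n}p_i^2\,\sigma_r(a'\setminus a_i)$, $C_1=\sigma_{r-1}(a')+\sum_{i<n}p_i^2\,\sigma_{r-1}(a'\setminus a_i)$, $C_2=\sigma_r(a')$; the absence of the $a_np_n^2$ cross term holds because the coefficient of $p_n^2$ is $\sigma_r^{(n)}$, which omits $a_n$ by definition (linearity of $\sigma_r$ in $a_n$ only excludes an $a_n^2$ term, so your parenthetical reason is slightly imprecise, though nothing breaks); your dichotomy (some $C_1=0$ versus all $C_1\neq0$) makes the selection of the two frozen instances unproblematic, since the non-affine $f_{n-1}$ moves $W_0^2$ through a nondegenerate range of values, and the initial count (fewer than $r$ non-affine functions forces the left side of $(\star)$ to vanish identically, hence $c=0$) covers the remaining cases. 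What each approach buys: yours is more elementary and uniform in $r$ --- no parity split, no Newton inequalities --- and it isolates exactly where $r>2$ enters, namely that the elimination step collapses for $r=2$ because $(W_0^2+u)^{0}\equiv1$; the paper's higher-derivative identities, by contrast, are shared with and reused in the proof of Theorem \ref{th1}, so its route integrates the classification and the rigidity statement at the cost of a heavier case analysis.
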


\section{Preliminaries and basic results}

Let $\overline{M}^{n+1}$ be a connected Riemannian manifold. In the remainder
of this paper, we will be concerned with isometric immersions, $\Psi:M^{n}\to
\overline{M}^{n+1}$, from a connected, $n$-dimensional orientable Riemannian
manifold, $M^{n}$, into $\overline{M}^{n+1}$. We fix an orientation of $M^{n}$,
by choosing a globally defined unit normal vector field, $N$, on $M$. Denote by $A$, the
corresponding shape operator. At each $p\in M$, $A$ restricts to a self-adjoint
linear map $A_{p}:T_{p}M\to T_{p}M$. For each $1\leq r\leq n$, let
$S_{r}:M^{n}\to\Bbb{R}$ be the smooth function such that $S_{r}(p)$ denotes the
$r$-th elementary symmetric function on the eigenvalues of $A_{p}$, which can be
defined by the identity
\begin{equation}\label{defSr}
 \det(A_p-\lambda I)=\sum_{k=0}^{n}(-1)^{n-k}S_{k}(p)\lambda^{n-k}.
\end{equation}
where $S_{0}=1$ by definition. If $p\in M^{n}$ and $\{e_{l}\}$ is a basis of
 $T_{p}M$, given by eigenvectors of $A_{p}$, with corresponding eigenvalues
$\{\lambda_{l}\}$, one immediately sees that
\[
 S_{r}=\sigma_{r}(\lambda_{1},\ldots,\lambda_{n}),
\]
where $\sigma_{r}\in\Bbb{R}[X_{1},\ldots,X_{n}]$ is the $r$-th elementary
symmetric polynomial on $X_{1},\ldots,X_{n}$. Consequently,
\[
S_{r}=\sum_{1\leq i_{1}<\cdots<i_{r}\leq n}\lambda_{i_{1}}\cdots\lambda_{i_{r}},
\, \, \, {\rm where} \, \, \, r=1,\ldots,n.
\]

In the next result we present an expression for the curvature $S_r$  of a
translation hypersurface in the Euclidean space. This expression will play an essential role in this paper.

\begin{proposition}\label{EqSr}
Let $F:\Omega\subset \Bbb{R}^n\to \Bbb{R}$ be a smooth function, defined
as follows $F(x_1,\ldots, x_n)=\sum_{i=1}^nf_i(x_i)$, where each $f_i$ is a
smooth function of one real variable. Let $M^n$ be the graphic of $F$, given
in coordinates by
\begin{equation}\label{paramet}
\varphi(x_1,\ldots,x_n)=\sum_{i=1}^n x_i e_i+F(x_1,\ldots,x_n)e_{n+1}
\end{equation}
The $S_r$ curvature of $M^n$ is given by
\begin{equation}\label{S_r}
S_r=\frac{1}{W^{r+2}}\cdot \sum_{1\leq i_1<\ldots <i_r\leq n}^n\ddot
f_{i_1}\ldots \ddot f_{i_r}(1+ \sum_{1\leq m\leq n\atop m\neq i_1\ldots i_r}
{\dot f}_m ^2),
\end{equation}
where the dot represents derivative with respect to the corresponding variable,
that is, for each $j=1,\ldots ,n$, one has $\dot f_j=\dfrac{df_j}{d
x_j}(x_j)=\dfrac{\partial F}{\partial x_j}(x_1,\ldots , x_n)$.
\end{proposition}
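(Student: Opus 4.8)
The plan is to assemble the three standard ingredients of the local theory of hypersurfaces — the induced metric, the unit normal, and the second fundamental form — and then feed them into the defining identity (\ref{defSr}). First I would differentiate the parametrization (\ref{paramet}) to get the tangent frame $\varphi_{x_i}=e_i+\dot f_i\,e_{n+1}$, so that the induced metric is $g_{ij}=\langle\varphi_{x_i},\varphi_{x_j}\rangle=\delta_{ij}+\dot f_i\dot f_j$. In matrix form $g=I+\dot f\,\dot f^{\,T}$, with $\dot f=(\dot f_1,\ldots,\dot f_n)^{T}$, a rank-one perturbation of the identity whose determinant is $\det g=1+\sum_i\dot f_i^{\,2}=W^2$. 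Choosing the unit normal $N=W^{-1}(-\dot f_1,\ldots,-\dot f_n,1)$ fixes the orientation that produces the stated sign.

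The decisive structural point is that, because $F$ is a sum of single-variable functions, all mixed second derivatives vanish: $\varphi_{x_ix_j}=\delta_{ij}\ddot f_i\,e_{n+1}$. Hence the second fundamental form $h_{ij}=\langle\varphi_{x_ix_j},N\rangle=W^{-1}\ddot f_i\,\delta_{ij}$ is diagonal, $h=\mathrm{diag}(\ddot f_1/W,\ldots,\ddot f_n/W)$. This is exactly what renders the whole computation explicit. To extract $S_r$ I would avoid computing the shape operator $A=g^{-1}h$ entry by entry and instead work with the generalized characteristic polynomial: since $\det(A-\lambda I)=(\det g)^{-1}\det(h-\lambda g)$, the identity (\ref{defSr}) shows that $(-1)^{n-r}S_r$ is precisely the coefficient of $\lambda^{n-r}$ in $W^{-2}\det(h-\lambda g)$. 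Writing $h-\lambda g=D-\lambda\,\dot f\,\dot f^{\,T}$ with $D=\mathrm{diag}(\ddot f_i/W-\lambda)$, the matrix determinant lemma gives
\[
\det(h-\lambda g)=\det D\,\Bigl(1-\lambda\,\dot f^{\,T}D^{-1}\dot f\Bigr)
=\prod_i\Bigl(\tfrac{\ddot f_i}{W}-\lambda\Bigr)-\lambda\sum_j\dot f_j^{\,2}\prod_{i\neq j}\Bigl(\tfrac{\ddot f_i}{W}-\lambda\Bigr).
\]

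The final step, which I expect to be the main bookkeeping obstacle, is to read off the coefficient of $\lambda^{n-r}$ and reorganize it into the claimed closed form. Expanding each product in elementary symmetric functions of the quantities $\ddot f_i/W$, the first term contributes $e_r$ evaluated on all indices, while the $j$-th summand of the second term contributes $e_r$ evaluated on the indices different from $j$, weighted by $\dot f_j^{\,2}$. Collecting by $r$-subsets $I=\{i_1<\cdots<i_r\}$, the monomial $\ddot f_{i_1}\cdots\ddot f_{i_r}$ appears once from the first term and once, with weight $\dot f_m^{\,2}$, for each index $m\notin I$ coming from the second. Since $e_r(\ddot f_\bullet/W)$ carries a global factor $W^{-r}$ and the prefactor $W^{-2}$ is already present, the total weight is $W^{-(r+2)}$ and the $(-1)^{n-r}$ signs cancel, yielding
\[
S_r=\frac{1}{W^{r+2}}\sum_{1\le i_1<\cdots<i_r\le n}\ddot f_{i_1}\cdots\ddot f_{i_r}\Bigl(1+\sum_{m\neq i_1,\ldots,i_r}\dot f_m^{\,2}\Bigr),
\]
which is exactly (\ref{S_r}). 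The only point needing care is that the matrix determinant lemma is applied on the open set where $D$ is invertible; since both sides are polynomials in $\lambda$, the resulting identity of coefficients extends to all $\lambda$ by continuity, so no genericity hypothesis is actually lost.
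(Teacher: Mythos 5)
Your proposal is correct, and it follows the paper's skeleton up to the decisive computation: identical tangent frame, the same metric $g=I+\dot f\,\dot f^{\,T}$ with $\det g=W^2$, the same choice of unit normal, and the same observation that the additive structure of $F$ makes the second fundamental form diagonal, after which both arguments reduce $S_r$ to a coefficient of the pencil determinant $\det(h-\lambda g)$ via (\ref{defSr}). Where you genuinely diverge is in how that coefficient is extracted. The paper computes $\frac{d^r}{d\lambda^r}\big|_{\lambda=0}\det(\lambda B-G)$ by multilinearity of the determinant in its columns, replacing $r$ columns $G^{i}=e_i+\dot f_i\nabla F$ by $B^{i}=(\ddot f_i/W)e_i$ and evaluating each resulting determinant directly (the expansion truncates because any two $\nabla F$ columns are parallel). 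You instead invoke the matrix determinant lemma on $D-\lambda\,\dot f\,\dot f^{\,T}$ to obtain the \emph{entire} characteristic polynomial of the pencil in closed form, then read off the $\lambda^{n-r}$ coefficient through elementary symmetric functions; I verified the bookkeeping, and the collection by $r$-subsets with the $W^{-(r+2)}$ weight and the cancellation of $(-1)^{n-r}$ are all right. Your route is more compact and buys more: it delivers all the $S_k$ simultaneously from one formula, whereas the paper's derivative-at-zero device produces only the targeted $S_r$ per computation. The paper's route, in exchange, is entirely algebraic and needs no invertibility of $D$; your genericity concern is real but you dispose of it correctly, since both sides are polynomials in $\lambda$ and the identity on the open dense set where $\prod_i(\ddot f_i/W-\lambda)\neq 0$ extends by continuity. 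In effect the paper's truncated column expansion and your rank-one update formula are the same mechanism, but you package it as a single cited lemma rather than an in-line multilinearity argument.
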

\begin{proof} Let $F$ be as stated in the Proposition, denote by $\nabla F=\sum_{i=1}^n\dfrac{\partial F}{\partial
x^i}\,\, e_i$ the Euclidean gradient of $F$ and $<,>$ the standard Euclidean inner
product. Then, we have
\begin{equation*}
\nabla F=\sum_{i=1}^n \dot f_i\,\, e_i
\end{equation*}
and the coordinate vector fields associated to the parametrization given
in (\ref{paramet}) have the following form
$$\frac{\partial \varphi}{\partial x_m}= e_m+\dot f_m e_{n+1},\quad m=1,\ldots, n.$$
Hence, the elements $G_{ij}$ of the metric of $M^n$ are given by
$$G_{ij}=\left< \frac{\partial \varphi}{\partial x^i}, \frac{\partial
\varphi}{\partial x^j}\right> =\delta_{ij}+\dot f_i\dot f_j,$$
implying that the matrix of the metric $G$ has the following form
\begin{equation*}\label{matriz_G}
G=I_n+(\nabla F)^t\,\, \nabla F,
\end{equation*}
where $I_n$ is the identity matrix of order $n$. Note that the $i$-th column of
$G$, which will be denoted by $G^i$ has the expression given by the column
vector
\begin{equation}\label{G^i}
G^i= e_i+\dot f_i \nabla F.
\end{equation}
An easy calculation shows that the unitary normal vector field $\xi$ of $M^n$
satisfies $$W\xi = e_{n+1}-\nabla F,$$ where $W^2=1+|\nabla F|^2$. Thus, the
second fundamental form $B_{ij}$ of $M^n$ satisfies
\begin{equation*}
WB_{ij}=\left< W\xi ,\frac{\partial^2\varphi}{\partial x^i\partial
x^j}\right>=\left< e_{n+1}-\nabla F, \delta_{ij}\ddot f_i
e_{n+1}\right>=\delta_{ij}\ddot f_i,
\end{equation*}
implying that the matrix of $B$ is diagonal
\begin{equation*}\label{matriz_B}
B=\frac{1}{W}\cdot \textrm{diag}(\ddot f_1,\ldots,\ddot f_n),
\end{equation*}
with $i$-th column given by the column vector
\begin{equation}\label{B^i}
B^i=\frac{\ddot f_i}{W}\,\,  e_i.
\end{equation}
If $A$ denotes the matrix of the Weingarten mapping, then $A=G^{-1} B$. In
(\ref{defSr}), changing $\lambda$ by $\lambda^{-1}$ gives
$$\det(\lambda A-I)=\sum_{i=1}^n(-1)^{n-i}S_i\lambda^{i}.$$
Thus, we conclude that the expression for curvature $S_r$ can be found by the
following calculation
\begin{equation*}
(-1)^{n-r}S_r=\frac{d^r}{d\lambda^r}_{\big|\lambda=0}\det(\lambda A-I).
\end{equation*}
Note that
\begin{equation*}
(-1)^{n-r}\det G\cdot S_r=\det G\cdot \frac{d^r}{d\lambda^r}_{\big|\lambda=0}
\det(\lambda A-I)=\frac{d^r}{d\lambda ^r}_{\big|\lambda=0}\det(\lambda B-G).
\end{equation*}
Due to the multilinearity of function $\det$, on its $n$ column vectors, it
follows immediately that
\begin{equation*}
\frac{d}{d\lambda}_{\big|\lambda=0}\det\ [\lambda B^1-G^1,\cdots, \lambda
B^n-G^n]=\sum_{i=1}^n(-1)^{n-1}\det\ [G^1,\cdots,\underbrace{B^i}_\textrm{i-th
term},\cdots,G^n],
\end{equation*}
leading to the conclusion
\begin{equation*}
\frac{d^r}{d\lambda^r}_{\big|\lambda=0}\det\ (\lambda B-G)  =  \sum_{1\leq
i_1<\ldots<i_r \leq n}^n(-1)^{n-r}\det\
[G^1,\cdots,B^{i_1},\cdots,B^{i_r},\cdots, G^n]
\end{equation*}
and thus
\begin{equation}\label{SrPARCIAL}
S_r=\frac{1}{\det G} \sum_{1\leq i_1<\cdots<i_r\leq n}^n\det\
[G^1,\cdots,B^{i_1},\cdots,B^{i_r},\cdots, G^n].
\end{equation}
Now, applying the expressions (\ref{G^i}) and (\ref{B^i}) in (\ref{SrPARCIAL})
we reach to the expression

\begin{equation}\label{S_r1}
S_r=\frac{1}{\det G\cdot W^r} \sum_{1\leq i_1<\ldots<i_r\leq n}^n
\ddot{f}_{i_1}\ldots \ddot{f}_{i_r}\det\ [ e_1+f'_1\nabla F,\ldots,
e_{i_1},\ldots, e_{i_r},\ldots,  e_n+f'_n\nabla F].
\end{equation}
Calculating the determinant on the right in the equality above, we get
\begin{eqnarray*}
\det\ [ e_1+\dot f_1\nabla F,&\ldots&,   e_{i_1},\ldots,e_{i_r}, \ldots ,
e_n+\dot f_n\nabla F]=\\&=&  1+\sum_{i\neq i_1,\ldots ,i_r} \dot f_i\det\ [
e_1,\ldots, e_{i_1},\ldots,\underbrace{\nabla F}_\textrm{$i$-th term},\ldots,
e_{i_r},\ldots,  e_n]\\ &=& 1+\sum_{1\leq i\leq n \atop{i \neq i_1,\ldots ,i_r
}} \dot f_i^2.
\end{eqnarray*}
Consequently, the expression for $S_r$ in (\ref{S_r1}) assumes the following
form
\begin{equation*}\label{S_r2}
S_r=\frac{1}{\det G\cdot W^r} \sum_{1\leq i_1<\ldots<i_r\leq n}^n
\ddot{f}_{i_1}\ldots \ddot{f}_{i_r}(1+\sum_{1\leq i\leq n \atop{i \neq
i_1,\ldots ,i_r }} \dot f_i^2).
\end{equation*}
Finally, using that $\det G=W^2$ we obtain the desired expression
\begin{equation*}\label{S_r3}
S_r=\frac{1}{W^{r+2}} \sum_{1\leq i_1<\ldots<i_r\leq n}^n\ddot{f}_{i_1}\ldots
\ddot{f}_{i_r}(1+\sum_{1\leq i\leq n \atop{i \neq i_1,\ldots ,i_r }} \dot
f_i^2).
\end{equation*}
\end{proof}

\section{Proof of the theorems}

In order to prove Theorem \ref{th1} we need the following lemma.

\begin{lemma}\label{casoparticular}
 Let $f_1,\ldots, f_r$ be smooth functions of one real variable satisfying the
differential equation
\begin{equation} \label{aux equat}
\sum_{k=1}^r\ddot{f_1}(x_1)\ldots \widehat{\ddot{f_k}(x_k)} \ldots
\ddot{f_r}(x_r)(\beta+\dot{f_k}^{2}(x_k))=0,
\end{equation}
where $\beta$ is a positive real constant and the big hat means an omitted term.
If $\ddot{f_i}\ne0$, for each $i=1, \ldots r$ then

 \begin{equation*} \label{solution partial}
\sum_{k=1}^{r} f_k(x_k) = \sum_{k=1}^{r-1}\frac{\sqrt{\beta}}{a_k}\ln
\left|\dfrac{\cos\left(-\dfrac{a_1\ldots a_{r-1}}{\sigma_{r-2}(a_1,\ldots ,
a_{r-1})}\sqrt{\beta}x_r+b_r\right)}{\cos(a_k\sqrt{\beta}x_k+b_k)}\right|+c
 \end{equation*}
where $a_i,b_i,c$, $i=1,\ldots r$ are real constants with $a_i ,
\sigma_{r-2}(a_1,\ldots , a_{r-1})\neq 0$.
\end{lemma}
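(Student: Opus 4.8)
The plan is to exploit the special product structure of \eqref{aux equat} to decouple the variables, turning the single functional equation into $r$ independent second-order ODEs. Since $\ddot f_i\neq 0$ for every $i$, I would first divide \eqref{aux equat} by the full product $\ddot f_1(x_1)\cdots\ddot f_r(x_r)$. In the $k$-th summand exactly the factor $\ddot f_k(x_k)$ is absent, so after dividing the equation collapses to
\[
\sum_{k=1}^{r}\frac{\beta+\dot f_k^{\,2}(x_k)}{\ddot f_k(x_k)}=0 .
\]
Each term now depends on a single, distinct variable. As the identity holds on a product of intervals, differentiating it with respect to $x_k$ annihilates every term but the $k$-th and forces $(\beta+\dot f_k^{\,2})/\ddot f_k$ to be a constant $c_k$, subject to $\sum_{k=1}^{r}c_k=0$. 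Because $\beta>0$ the numerator $\beta+\dot f_k^{\,2}$ is strictly positive, so no $c_k$ vanishes; in particular $r\ge 2$.

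With the variables separated, each $f_k$ solves the autonomous, separable ODE $c_k\,\ddot f_k=\beta+\dot f_k^{\,2}$. Putting $u=\dot f_k$ and integrating $\dfrac{du}{\beta+u^{2}}=\dfrac{dx_k}{c_k}$ yields an arctangent, hence
\[
\dot f_k(x_k)=\sqrt{\beta}\,\tan\!\bigl(a_k\sqrt{\beta}\,x_k+b_k\bigr),\qquad a_k:=\tfrac{1}{c_k},
\]
where I set $a_k=1/c_k$ so that the coefficient of $x_k$ inside the tangent is exactly $a_k\sqrt{\beta}$. A second integration, via $\int\tan w\,dw=-\ln|\cos w|$, writes $f_k$ as a constant multiple of $\ln\bigl|\cos(a_k\sqrt{\beta}\,x_k+b_k)\bigr|$ plus an additive constant $C_k$, the multiplier being proportional to $1/a_k$.

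The remaining step is bookkeeping: removing one constant through $\sum_{k=1}^{r}c_k=\sum_{k=1}^{r}1/a_k=0$. Solving for the last index,
\[
\frac{1}{a_r}=-\sum_{k=1}^{r-1}\frac{1}{a_k}=-\frac{\sigma_{r-2}(a_1,\dots,a_{r-1})}{a_1\cdots a_{r-1}},\qquad\text{i.e.}\qquad a_r=-\frac{a_1\cdots a_{r-1}}{\sigma_{r-2}(a_1,\dots,a_{r-1})},
\]
and since $c_r\neq 0$ this forces $\sigma_{r-2}(a_1,\dots,a_{r-1})\neq 0$, exactly the hypothesis in the statement. Substituting this value of $a_r$ into the $r$-th logarithm and absorbing the $C_k$ into a single constant $c$, I would regroup the $r$ logarithms into the $r-1$ quotients of the conclusion. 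The single numerator $\cos(a_r\sqrt{\beta}\,x_r+b_r)$ is common to every fraction precisely because the constraint $\sum_{k=1}^{r}1/a_k=0$ makes the coefficient multiplying $\ln|\cos(a_r\sqrt{\beta}\,x_r+b_r)|$ equal to minus the sum of the coefficients multiplying the other logarithms.

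I expect the only genuine idea to be the opening reduction: once one sees that dividing by $\prod_k\ddot f_k$ turns \eqref{aux equat} into a sum of one-variable functions that vanishes identically, the separation of variables and the two elementary integrations are routine, and the formula for $a_r$ is just the elementary identity $\sum_{k=1}^{r-1}1/a_k=\sigma_{r-2}(a_1,\dots,a_{r-1})/(a_1\cdots a_{r-1})$. The points deserving a little care are the strict-positivity argument that excludes $c_k=0$, and the control of the intervals on which $\cos(a_k\sqrt{\beta}\,x_k+b_k)\neq 0$, which is what ultimately fixes the domains of definition appearing in Theorem \ref{th1}.
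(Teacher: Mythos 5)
Your proposal is correct and follows essentially the same route as the paper's own proof: divide by $\prod_k \ddot f_k$ to separate variables, differentiate in each $x_k$ to force $(\beta+\dot f_k^2)/\ddot f_k$ to be a nonzero constant, integrate twice via arctangent and $\int\tan = -\ln|\cos|$, and use $\sum_k 1/a_k=0$ with the identity $\sum_{k=1}^{r-1}1/a_k=\sigma_{r-2}(a_1,\dots,a_{r-1})/(a_1\cdots a_{r-1})$ to regroup the logarithms into the stated quotients. Your explicit remark that $\beta>0$ excludes a vanishing constant is a small but welcome refinement over the paper, which merely asserts the constant is non-null.
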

\begin{proof}
Since the derivatives $\ddot f_i\ne 0$ it follows that $\ddot{f_1}(x_1) \ldots
\ddot{f_r}(x_n)\ne 0$. Thus dividing (\ref{aux equat}) by this product we get
the equivalent equation:

\begin{equation*}
\sum_{k=1}^r\dfrac {\beta+\dot{f_k}^{2}(x_k)}{\ddot{f_k}(x_k)}=0,
\end{equation*}
 which implies, after taking derivative with respect to $x_l$ for each
$l=1,\ldots r$, that $\left(\dfrac
{\beta+\dot{f_l}^{2}(x_l)}{\ddot{f_l}(x_l)}\right)'=0$, thus $\dfrac
{\beta+\dot{f_l}^{2}(x_l)}{\ddot{f_l}(x_l)}= \tilde a_l$ for some non null
constant $\tilde a_l$. Thus, setting $a_l=\dfrac1{ \tilde a_l}$
 \begin{equation*}
 \dfrac {\ddot{f_l}(x_l)}{\beta+\dot{f_l}^{2}(x_l)}=a_l \quad \mbox{for each
$l=1,\ldots ,r$}
 \end{equation*}
 which can be easily solved to give:
 \begin{equation*}
  \arctan\left(\frac{\dot{f_l}(x_l)}{\sqrt{\beta}}\right) = a_l\sqrt{\beta}x+b_l
\quad\mbox{for some constant $b_l$}
 \end{equation*}
 and consequently
 \begin{equation} \label{fl}
 f_l(x_l) = -\frac{1}{a_l}\sqrt{\beta}\ln|\cos(a_l\sqrt{\beta}x_l+b_l)|+c_l,
\quad l=1,\ldots , r.
 \end{equation}
Now, since $\sum_{k=1}^r \dfrac 1{a_k}=0$ it implies that
$\dfrac1{a_r}=-\dfrac{\sigma_{r-2}(a_1,\ldots , a_{r-1})}{a_1\ldots a_{r-1}}$,
from (\ref{fl}) it follows that
 \begin{equation*}
 f_r(x_r) = \dfrac{\sigma_{r-2}(a_1,\ldots , a_{r-1})}{a_1\ldots
a_{r-1}}\sqrt{\beta}\ln|\cos(a_r\sqrt{\beta}x_r+b_r)|+c_r
 \end{equation*}
Consequently
 \begin{equation*}
\sum_{k=1}^{r} f_k(x_k) = \sum_{k=1}^{r-1}\frac{1}{a_k}\sqrt{\beta}\ln
\left|\dfrac{\cos\left(-\dfrac{a_1\ldots a_{r-1}}{\sigma_{r-2}(a_1,\ldots ,
a_{r-1})}\sqrt{\beta}x_r+b_r\right)}{\cos(a_k\sqrt{\beta}x_k+b_k)}\right|+c,
 \end{equation*}
where $c=c_1+\ldots+c_r$.
\end{proof}

With this lemma at hand we can go to the proof of Theorem \ref{th1}.

%\subsection{Proof of Theorem \ref{th1}}

\begin{proof}[\sc{Proof of Theorem \ref{th1}}] From Proposition \ref{EqSr},
we have that $M^{n}$ has zero $S_r$ curvature if, and only if,
\begin{equation}\label{EqSrzero}
\sum_{1\leq i_1<\ldots <i_r \leq n}\ddot{f}_{i_1} \ldots \ddot{f}_{i_r}
\Big(1+\sum_{ {1\leq k\leq n} \atop {k\notin \{i_1,\ldots i_r\}}}
\dot{f}_{k}^{2}\Big)=0.
\end{equation}

\noindent In order to ease the analysis, we divide the proof in four cases.

\vspace{.3cm}

\noindent\textbf{Case\, 1:} Suppose $\ddot{f}_{i}(x_{i})=0$, $\forall\,
i=1,\ldots, n-r+1$. In this case, we have no restrictions on the functions
$f_{n-r+2}, \ldots , f_{n}$. Thus
\[
 \Psi(x_{1},\ldots,x_{n})=(x_{1},\ldots,x_{n},\sum_{i=1}^{n-r+1}a_{i}x_{i}
+\sum_{j=n-r+2}^{n}f_{j}(x_{j})+b)
\]
where $a_{i},b\in\Bbb{R}$ and for $l=n-r+2,\ldots , n$, the functions $f_{l}:I_l\subset\Bbb{R}\to\Bbb{R}$ are arbitrary smooth
functions of one real variable. Note that the parametrization obtained comprise
hyperplanes.

\vspace{.3cm}

\noindent\textbf{Case\, 2:} Suppose $\ddot{f}_{i}(x_{i})=0$, $\forall\,
i=1,\ldots, n-r$, then, there are constants $\alpha_{i}$ such  that
$\dot f_{i}=\alpha_{i}$, for $i=1,\ldots, n-r$. From (\ref{EqSrzero}) we have
\[
\ddot{f}_{n-r+1}\ldots \ddot{f}_{n}(1+\alpha_{1}^{2}+\cdots+\alpha_{n-r}^{2})=0,
\]
from which we conclude that $\ddot{f}_{k}=0$ for some $k\in \{ n-r+1, \ldots
n\}$
and thus, this case is contained in the Case $1$.

\vspace{.3cm}

\noindent\textbf{Case\, 3:} Now suppose $\ddot{f}_{i}(x_{i})=0$, $\forall\,
i=1,\ldots, n-r-1$ and $\ddot{f}_{k}(x_{k})\ne0$, for every $k=n- r, \ldots ,n$.
Observe that
if we had $\ddot{f}_{k}(x_{k})=0$ for some $k=n-r, \ldots, n$ the analysis
would reduce to the Cases 1 and 2. In this case,  there are constants
$\alpha_{i}$
such that $\dot f_{i}=\alpha_{i}$ for any $1\leq i\leq n-r-1$. From
(\ref{EqSrzero})
we have
\[
\sum_{k=n-r}^n \ddot{f}_{n-r}\ldots \widehat{\ddot{f}_{k}} \ldots \ddot{f}_n
(\beta+\dot{f}_{k}^{2})
=0
\]
where $\beta=1+\displaystyle\sum_{k=1}^{n-r-1}\alpha_{k}^{2}$ and the hat means
an omitted term. Then, from Lemma
\ref{casoparticular} we have that
 \begin{equation*} \label{solution partial}
\sum_{k=n-r}^{n} f_k(x_k) = \sum_{k=n-r}^{n-1}\frac{\sqrt{\beta}}{a_k}\ln
\left|\dfrac{\cos\left(-\dfrac{a_{n-r}\ldots
a_{n-1}}{\sigma_{r-1}(a_{n-r},\ldots ,
a_{n-1})}\sqrt{\beta}x_n+b_n\right)}{\cos(a_k\sqrt{\beta}x_k+b_k)}\right|+c
 \end{equation*}
where $a_{n-r},\ldots ,a_{n-1},b_{n-r},\ldots , b_{n}$ and $c$ are real
constants, and $a_{n-r}, \ldots, a_{n-1}$, and $\sigma_{r-1}(a_{n-r}, \ldots,
a_{n-1})$ are nonzero.

\vspace{.3cm}

\noindent\textbf{Case\, 4:} Finally, suppose $\ddot{f}_{i}(x_{i})=0$, where
$1\leq i\leq k$ and $n-k\geq r+2$, and $\ddot{f}_{i}(x_{i})\ne 0$ for any $i>k$.
We will show that this case cannot occur. In fact, note that for any fixed $l\ge
k+1$
\begin{eqnarray*}
 \sum_{k+1\leq i_1<\ldots < i_r\leq n}\ddot{f}_{i_1}\ldots \ddot{f}_{i_r}
\Big(1&+&\sum_{ { {1\leq m\leq n} \atop {m\not=i_1,\ldots i_r}} }
\dot{f}_{m}^{2}\Big)\\
&=&\ddot{f}_{l}\sum_{{k+1\leq i_1< \ldots <i_{r-1} \leq n \atop i_1,\ldots
,i_{r-1}\not= l}}\ddot{f}_{i_1} \ldots \ddot{f}_{i_{r-1}}
\Big(1+\sum_{ { {1\leq m\leq n} \atop {m\not=l,i_1,\ldots,i_{r-1}}} }
\dot{f}_{m}^{2}\Big)\\
 &+&\sum_{{k+1\leq i_1<\ldots i_r\leq n \atop i_1,\ldots ,i_r\not=l} }
\ddot{f}_{i_1}\ldots \ddot{f}_{i_r} \Big(1+\sum_{ { {1\leq m\leq n} \atop
{m\not=i_1,\ldots i_r}} } \dot{f}_{m}^{2}\Big)\\
\end{eqnarray*}

\noindent Derivative with respect to the variable $x_{l}\, (l\geq k+1)$, in the
above equality, gives
\begin{eqnarray}\label{derivadal}
 \dddot{f}_{l}\sum_{{k+1\leq i_1<\ldots<i_{r-1}\leq n \atop i_1,\ldots,i_{r-1}
\not=l}} \ddot{f}_{i_1}\ldots\ddot{f}_{i_{r-1}}\Big(1&+&\sum_{ {
{1\leq m\leq n} \atop {m\not=l,i_1,\ldots,i_{r-1}}} }
\dot{f}_{m}^{2}\Big)\nonumber\\
&+&2\dot{f}_{l}\ddot{f}_{l}
\sum_{k+1\leq i_1<\ldots<i_r\leq n \atop i_1,\ldots,i_r\not=l}
\ddot{f}_{i_1}\ldots\ddot{f}_{i_r}=0.
\end{eqnarray}

\noindent That is, if we set
\begin{eqnarray*}
A_l&=&\sum_{{k+1\leq i_1<\ldots<i_{r-1}\leq n \atop i_1,\ldots,i_{r-1}
\not=l}} \ddot{f}_{i_1}\ldots\ddot{f}_{i_{r-1}}\Big(1+\sum_{ {
{1\leq m\leq n} \atop {m\not=l,i_1,\ldots,i_{r-1}}} }
\dot{f}_{m}^{2}\Big) \quad\mbox{and}\quad\\
 B_l&=& \sum_{k+1\leq i_1<\ldots<i_r\leq n \atop i_1,\ldots,i_r\not=l}
\ddot{f}_{i_1}\ldots\ddot{f}_{i_r}
\end{eqnarray*}
then, it follows that $A_l, B_l$ do not depend on the variable $x_l$ and we can
write
\begin{equation} \label{AlBl}
A_l\dddot f_l+2B_l\dot f_l\ddot f_l=0.
\end{equation}
We have two possible situations to take into account: {\bf Case I.} $A_l\neq 0,\, \forall\,l\geq k+1$, and {\bf Case II.}
there is an $l\ge 1$ such that $A_l=0$.

\vspace{.2cm}

\noindent\textbf{Case I.\, $A_{l}\neq 0$:} Under this assumption, there are
constants $\alpha_{l}\, (l=k+1,\ldots,n)$ such that equation (\ref{AlBl})
becomes $\dddot{f}_{l}+2\alpha_{l}\dot{f}_{l}\ddot{f}_{l}=0$. Furthermore,
it can be shown that for $\{l_1,\ldots,l_{r+1}\} \subset \{k+1,\ldots,n\}$
\begin{equation}\label{dGr}
\frac{\partial^{r+1}G_r(f_1,\ldots,f_n)}{\partial x_{l_1}\cdots \partial
x_{l_{r+1}}} =2\sum_{k=1}^{r+1}\left(\dot{f}_{l_k}\,\ddot{f}_{l_k} \prod_{m=1
\atop m\ne k}^{r+1}\dddot{f}_{l_m} \right)
\end{equation}
where
\begin{equation*}
G_r(f_{k+1},\ldots,f_n):=W^{r+2}S_r=\displaystyle \sum_{k+1\leq
i_1<\ldots<i_r\leq n}^n\ddot{f}_{i_1}\ldots \ddot{f}_{i_r}(1+\sum_{1\leq i\leq n
\atop{i \neq i_1,\ldots ,i_r }} \dot f_i^2).
\end{equation*}
Since $S_r = 0$ it follows that $G_r = 0$, and using that $\displaystyle
\prod_{k=1}^{r+1} \dot{f}_{l_k}\,\ddot{f}_{l_k}\ne0$ we obtain
\begin{eqnarray}\label{aa}
\sum_{k=1}^{r+1}\left(\prod_{m=1 \atop m\ne k}^{r+1}
\frac{\dddot{f}_{l_m}}{\dot{f}_{l_m}\ddot{f}_{l_m}}\right)&=&\frac{ \displaystyle
\sum_{s=1}^{r+1} \left(\dot{f}_{l_s}\,\ddot{f}_{l_s} \prod_{m=1 \atop m\ne
s}^{r+1}\dddot{f}_{l_m} \right)}{\displaystyle \prod_{k=1}^{r+1}
\dot{f}_{l_k}\,\ddot{f}_{l_k}}=0.
\end{eqnarray}
Now, for $l=l_1,\ldots l_{r+1}$, substitute $\dddot{f}_{l}+2\alpha_{l} \dot{f}_{l}\ddot{f}_{l}=0$ in (\ref{aa}) to
obtain the identity
\begin{equation}\label{Srlr+1}
\sigma_{r}(\alpha_{l_1},\ldots,\alpha_{l_r},\alpha_{l_{r+1}})=0
\end{equation}
for any $l_1,\ldots,l_r,l_{r+1}\in\{k+1,\ldots,n\}$. Hence we conclude that,
\begin{eqnarray*}
 \sigma_{r}(\alpha_{k+1},\ldots,\alpha_{n})&=&0\\
\sigma_{r+1}(\alpha_{k+1},\ldots,\alpha_{n})&=&0.
\end{eqnarray*}
These equalities, from Proposition \ref{Caminha}, imply that at most $r-1$
of the constants $\alpha_{l}\, (l\geq k+1)$ are nonzero. If  $\alpha_{l_1}\ne0,
\ldots,\alpha_{l_{m}}\ne0$ with $m\leq r-1$, in the expression obtained
for $B_l$, making $l\ne l_1,\ldots,l_{m}$ and taking derivatives with respect
to the variables $x_{l_1},\ldots,x_{l_{m}}$ we get
\[
\prod_{j=l_1}^{l_{m}}\dddot{f}_j\cdot\sigma_{r-m}(\ddot{f}_{k+1},\ldots,
\widehat{\ddot{f}_l},\ldots,\widehat{\ddot{f}}_{l_1},\ldots,
\widehat{\ddot{f}}_{l_{m}},\ldots,\ddot{f}_n)=0
\]
for all $l\in\{k+1,\ldots,n\}\smallsetminus\{l_1,\ldots,l_{m}\}$. As
$\dddot{f}_j\ne0$ for all $j\in\{l_1\ldots,l_{m}\}$, we obtain that
\[
\sigma_{r-m}(\ddot{f}_{k+1},\ldots,\widehat{\ddot{f}}_l,\ldots,\widehat
{\ddot{f}}_{l_1},\ldots,\widehat{\ddot{f}}_{l_{m}},\ldots,\ddot{f}_n)=0
\]
for all $l\in\{k+1,\ldots,n\}\smallsetminus\{l_1,\ldots,l_{m}\}$. Consequently,
\begin{eqnarray*}
\sigma_{r-m}(\ddot{f}_{k+1},,\ldots,\widehat{\ddot{f}}_{l_1},\ldots,\widehat
{\ddot{f}}_{l_{m}},\ldots,,\ddot{f}_{n})&=&0\\
\sigma_{r-m+1}(\ddot{f}_{k+1},,\ldots,\widehat{\ddot{f}}_{l_1},\ldots,\widehat
{\ddot{f}}_{l_{m}},\ldots,,\ddot{f}_{n})&=&0.
\end{eqnarray*}
Since $(n-k-m)-(r-m)=n-k-r\geq2$, at most $r-m-1$ of the functions
$\ddot{f}_{l}$ are nonzero, for $k+1\leq l\leq n$ and $l\ne l_1,\ldots,l_m$, leading to a contradiction. So, $\alpha_j=0$ for all
$j\in\{l_1\ldots,l_{r-1}\}$, which implies that $\ddot{f}_{l}$ is
constant for all $l\in\{k+1,\ldots,n\}$. Now, again from equation
(\ref{derivadal}) we get
\[
\sum_{k+1\leq i_1<\ldots<i_r\leq n \atop i_1,\ldots,i_r\not=l}
\ddot{f}_{i_1}\ldots\ddot{f}_{i_r}=0, \qquad \mbox{for any $l\in\{k+1,\ldots,n\}$}.
\]
\noindent From which, we conclude that
\begin{eqnarray*}
\sigma_{r}(\ddot{f}_{k+1},\ldots,\ddot{f}_{n})&=&0\\
\sigma_{r+1}(\ddot{f}_{k+1},\ldots,\ddot{f}_{n})&=&0.
\end{eqnarray*}

\noindent Therefore, at most $r-1$ of the functions $\ddot{f}_{l}\,\,(k+1\leq l\leq
n)$ are nonzero, leading to a contradiction. Thus, it follows that Case 4 cannot
occur, if $A_l\neq 0$ for every $l$.

\vspace{.3cm}

\noindent\textbf{Case\, $A_{l} = 0$:} In this case, we have $B_l\dot{f_l}
\ddot{f_l}=0$ implying
\begin{eqnarray*}
A_l&=&\sum_{{k+1\leq i_1<\ldots<i_{r-1}\leq n \atop i_1,\ldots,i_{r-1}
\not=l}} \ddot{f}_{i_1}\ldots\ddot{f}_{i_{r-1}}\Big(1+\sum_{ {
{1\leq m\leq n} \atop {m\not=l,i_1,\ldots,i_{r-1}}} }
\dot{f}_{m}^{2}\Big)=0 \quad\mbox{and}\quad\\
 B_l&=& \sum_{k+1\leq i_1<\ldots<i_r\leq n \atop i_1,\ldots,i_r\not=l}
\ddot{f}_{i_1}\ldots\ddot{f}_{i_r}=0.
\end{eqnarray*}

\noindent Derivative of $A_l$ with respect to variable $x_s$, for
$s=k+1,\ldots,n$ and $s\neq l$, gives

\begin{eqnarray}\label{dAls}
&&\dddot{f}_{s}\sum_{{k+1\leq i_1<\ldots<i_{r-2}\leq n \atop
i_1,\ldots,i_{r-2} \not=l,s}} \ddot{f}_{i_1}\ldots\ddot{f}_{i_{r-2}}
\Big(1+\sum_{ { {k+1\leq m\leq n} \atop {m\not=l,s,i_1,\ldots,i_{r-2}}}
}\dot{f}_{m}^{2}\Big)\nonumber\\
&&\qquad +2\dot{f}_{s}\ddot{f}_{s}\sum_{{k+1\leq i_1<\ldots<i_{r-1}\leq
n \atop i_1,\ldots,i_{r-1}\not=l,s}}\ddot{f}_{i_1}\ldots
\ddot{f}_{i_{r-1}}=0.
\end{eqnarray}

\noindent Now, for $i_1,\ldots,i_r\in\{k+1,\ldots ,n \}$ with $i_1,\ldots,i_r,l$
distinct indices, taking the derivatives of $B_{l}$ with respect to $x_{i_1},
\ldots,x_{i_r}$ gives
\begin{eqnarray*}
\dddot{f}_{i_1}\ldots\dddot{f}_{i_r}=0.
\end{eqnarray*}
Consequently, for at most $r-1$ indices, say $i_1,\ldots,i_{r-1}$, we can have
$\dddot f_{i_m}\neq 0$, $(m=1,\ldots,r-1)$, and $\dddot f_j=0$ for every
$j=k+1,\ldots ,n$, with $j\neq l,i_1,\ldots,i_{r-1}$. Thus $\dddot f_{i_m}\neq
0$, with $i_m\neq l$, together with equation $\frac{\partial B_l}{\partial
x_{i_m}} =0$ implies that the sum
\[
\sum_{k+1\leq i_1<\ldots<i_{r-1}\leq n \atop i_1,\ldots,i_{r-1}\not=l,i_m}
\ddot{f}_{i_1}\ldots\ddot{f}_{i_{r-1}}=0.
\]
Now, if $\dddot{f}_j = 0$ we have by equation (\ref{dAls}) that
\[
\sum_{k+1\leq i_1<\ldots<i_{r-1}\leq n \atop i_1,\ldots,i_{r-1}\not=l,j}
\ddot{f}_{i_1}\ldots\ddot{f}_{i_{r-1}}=0.
\]
Therefore,
\[
\sum_{k+1\leq i_1<\ldots<i_{r-1}\leq n \atop i_1,\ldots,i_{r-1}\not=l,j}
\ddot{f}_{i_1}\ldots\ddot{f}_{i_{r-1}}=0, \quad j=k+1,\ldots ,n
\,\, \mbox{and $j\neq l$}
\]
\noindent From which, we conclude that
\begin{eqnarray*}
\sigma_{r-1}(\ddot{f}_{k+1},\ldots,\widehat{\ddot{f}}_l,\ldots,\ddot{f}_{n})
&=&0\\
\sigma_{r}(\ddot{f}_{k+1},\ldots,\widehat{\ddot{f}}_l,\ldots,\ddot{f}_{n})&=&0.
\end{eqnarray*}
Thus, for at most $r-2\,(r\geq3)$ indices we must have $\ddot f_j\neq0$, for every
$j=k+1,\ldots ,n$, and $j\neq l$.  This contradicts the hypothesis assumed in
Case 4. Hence, $A_l=0$ cannot occur. Since the case $A_l\neq 0$, cannot occur as
well, it follows that Case 4 is not possible. This completes the proof of the
theorem.
\end{proof}

%\subsection{Proof of Theorem \ref{th2}}

\vspace{.1cm}

\begin{proof}[\sc{Proof of Theorem \ref{th2}}] Let $M^n\subset\Bbb{R}^{n+1}$ be
a translation hypersurface with constant $S_{r}$ curvature. First, note that
\begin{eqnarray}\label{d^dW^r}
\frac{\partial^mW^{r+2}}{\partial x_{i_1}\cdots \partial x_{i_m}}
&=&\prod_{j=1}^{m}(r+4-2j)\cdot\prod_{k=1}^{m}\dot{f}_k\,\ddot{f}_k\cdot
W^{r+2-2m}.
\end{eqnarray}
We have as a consequence of the proof of Theorem 1, see (\ref{dGr}), the identity
%Now, we prove by Induction on $r$ that
\[
\frac{\partial^{r+1}G_r(f_1,\ldots,f_n)}{\partial x_{l_1}\cdots \partial
x_{l_{r+1}}} =2\sum_{k=1}^{r+1}\left(\dot{f}_{l_k}\,\ddot{f}_{l_k} \prod_{m=1
\atop m\ne k}^{r+1}\dddot{f}_{l_m} \right)
\]
where $G_r(f_1,\ldots,f_n)=\displaystyle \sum_{1\leq i_1<\ldots<i_r\leq
n}^n\ddot{f}_{i_1}\ldots \ddot{f}_{i_r}(1+\sum_{1\leq i\leq n \atop{i \neq
i_1,\ldots ,i_r }} \dot f_i^2)$. %For this, we have initially that
With this we conclude, by Proposition \ref{EqSr}, that

\begin{eqnarray}\label{dWr}
\prod_{j=1}^{r+1}(r+4-2j)\cdot\prod_{k=1}^{r+1}\dot{f}_{l_k}\,\ddot{f}_{l_k}\cdot
W^{-r}S_r&=&\frac{\partial^{r+1}\,(W^{r+2}S_r)}{\partial x_{l_1}\cdots \partial
x_{l_{r+1}}}\nonumber\\
&=&2\sum_{k=1}^{r+1}\left(\dot{f}_{l_k}\,\ddot{f}_{l_k} \prod_{m=1
\atop m\ne k}^{r+1}\dddot{f}_{l_m} \right).
\end{eqnarray}
Now, we have two cases to consider: $r$ odd and $r$ even.

\vspace{.2cm}

\noindent\textbf{Case\, $r$ odd:} Suppose that there are $l_1,\ldots, l_{r+1}$
such that $\displaystyle\prod_{k=1}^{r+1} \dot{f}_{l_k}\,\ddot{f}_{l_k}\ne0$.
Then,
\begin{eqnarray*}
Q_r:=\prod_{j=1}^{r+1}(r+4-2j)\cdot W^{-r}S_r&=&2\frac{\displaystyle
\sum_{s=1}^{r+1} \left(\dot{f} _{l_s}\,\ddot{f} _{l_s} \prod_{m=1 \atop m\ne
s}^{r+1}\dddot{f}_{l_m} \right)}{\displaystyle \prod_{k=1}^{r+1}
\dot{f}_{l_k}\,\ddot{f}_{l_k}}\\
&=&2\sum_{k=1}^{r+1}\left(\prod_{m=1 \atop m\ne
k}^{r+1}\frac{\dddot{f}_{l_m}}{\dot{f}_{l_m}\ddot{f}_{l_m}}\right).
\end{eqnarray*}
Therefore,
\[
\frac{\partial^{r+1}Q_r}{\partial x_{l_1}\cdots \partial
x_{l_{r+1}}}=0.
\]
On the other hand, using (\ref{d^dW^r}) we obtain
\[
\frac{\partial^{r+1}Q_r}{\partial x_{l_1}\cdots \partial x_{l_{r+1}}}
=\displaystyle\prod_{j=1}^{r+1}(r+4-2j)\prod_{i=1}^{r+1}(-r+2-2i)
\prod_{k=1}^{r+1} \dot{f}_{l_k}\,\ddot{f}_{l_k}W^{-3r-2}S_r.
\]
Since $r$ is odd, we conclude that $r+4-2j\ne0$ and $-r+2-2j\ne0$, for any $j\in\Bbb{N}$ and, therefore, $S_r=0$.

\vspace{.1cm}

\noindent Now, if for at most $r$ indices we
have $\ddot f_j\neq0$ for example $j=l_1,\ldots,l_r$ then
\[
W^{r+2}S_r=\ddot{f}_{l_1}\cdots\ddot{f}_{l_r}\alpha,
\]
for some constant $\alpha\ne0$. Thus,
\[
(r+2)W^r\dot{f}_{l_1}\ddot{f}_{l_1}S_r=\dddot{f}_{l_1}\ddot{f}_{l_2}
\cdots\ddot{f}_{l_r}\alpha.
\]
If $\dddot{f}_{l_1}=0$, then $S_r=0$. Otherwise,
\[
(r+2)W^{r+2}\dot{f}_{l_1}\ddot{f}_{l_1}S_r=\dddot{f}_{l_1}\ddot{f}_{l_2}
\cdots\ddot{f}_{l_r}W^2\alpha\quad \Rightarrow \quad (r+2)\dot{f}_{l_1}(\ddot{f}_{l_1})^2
=\dddot{f}_{l_1}W^2.
\]
As $r>1$ implies that $W$ does not depend on the variables $x_{l_2},\ldots, x_{l_n}$,
it follows that $\ddot{f}_{l_2}=\cdots=\ddot{f}_{l_n}=0$ leading to a contradiction.

\vspace{.2cm}

\noindent\textbf{Case\, $r$ even:} In this case, there is a natural $q\geq2$
such that $r=2q$. Then $r+1\geq q+2$ and consequently
\[
\prod_{k=1}^{r+1}(r+4-2k)=0.
\]
Therefore, by (\ref{dWr}) we get
\[
\sum_{k=1}^{r+1}\left(\dot{f}_{l_k}\,\ddot{f} _{l_k} \prod_{m=1
\atop m\ne k}^{r+1}\dddot{f}_{l_m} \right)=0.
\]
Suppose that there are $l_1,\ldots, l_{r+1}$ such that $\displaystyle
\prod_{k=1}^{r+1}\ddot{f}_{l_k}\ne0$. In this case,
\[
\sum_{k=1}^{r+1}\left(\prod_{m=1 \atop m\ne
k}^{r+1}\frac{\dddot{f}_{l_m}}{\dot{f}_{l_m}\ddot{f}_{l_m}}\right)=0.
\]
We conclude that for each $l_i$ there is a constant $\alpha_{l_i}$
such that $\dddot{f}_{l_i}=\alpha_{l_i}\dot{f}_{l_i}\ddot{f}_{l_i}$.
Now, it is easy to verify (see (\ref{derivadal})) that
%Now, using the notation introduced in the case $r$ odd we have
\begin{eqnarray*}
(r+2)\dot{f}_{l_{r+1}}\ddot{f}_{l_{r+1}}W^rS_r&=&\frac{\partial\,
G_r(f_1,\ldots,f_n)}{\partial x_{l_{r+1}}}\\&=&\dddot{f}_{l_{r+1}}\,
G_{r-1}(f_1,\ldots,\widehat{f}_l,\ldots,f_n)\\&+&2\dot{f}_{l_{r+1}}
\ddot{f}_{l_{r+1}}\sum_{1\leq i_1<\ldots<i_r\leq n \atop
i_1,\ldots,i_r\not=l_{r+1}}\ddot{f}_{i_1}\ldots\ddot{f}_{i_r}\,.
\end{eqnarray*}
Therefore,
\[
(r+2)W^rS_r=\alpha_{l_{r+1}}\,G_{r-1}(f_1,\ldots,\widehat{f}_{l_{r+1}},\ldots,
f_n)\\+2\sum_{1\leq i_1<\ldots<i_r\leq n \atop i_1,\ldots,i_r\not=l_{r+1}}
\ddot{f}_{i_1}\ldots\ddot{f}_{i_r}\,.
\]
Differentiating this identity with respect to the variable $x_{l_{r+1}}$, gives
\[
(r+2)\,r\,\dot{f}_{l_{r+1}}\ddot{f}_{l_{r+1}}W^{r-2}S_r=0\quad \mbox{implying that} \quad S_r=0.
\]
Finally, suppose that for any $(r+1)$-tuple of indices, say $l_1,\ldots, l_{r+1}$ it holds that
$\displaystyle\prod_{k=1}^{r+1}\ddot{f}_{l_k}=0$. Then,
\begin{eqnarray*}
\sigma_{r+1}(\ddot{f}_{1},\ldots,\ddot{f}_{n})&=&0\\
\sigma_{r+2}(\ddot{f}_{1},\ldots,\ddot{f}_{n})&=&0.
\end{eqnarray*}
Implying that at least $n-r$ derivatives $\ddot{f}_{l}$ vanish, i.e.,
there are at most $r$ functions such that $\ddot f_j\neq0$ for example
$j=l_1,\ldots,l_r$. Thus, by Proposition \ref{EqSr}
\[
W^{r+2}S_r=\ddot{f}_{l_1}\cdots\ddot{f}_{l_r}\alpha
\]
for some constant $\alpha\ne0$. We conclude that $S_r=0$ analogously
to the way it was presented for the case $r$ odd.
\end{proof}

\section{Final remarks}

The problem of classifying translation surfaces under curvature constraints is far from been complete.
It is remarkable that the solutions give rise to generalized Scherk-like solutions and it would be interesting
to understand which solutions could be obtained when considering other classes of curvature.
Another problem to be considered consists in classify the translation n-surfaces of higher codimension
or imbedded into other Riemann or semi-Riemannian manifolds.
%\section{Acknowledgement}

%The authors wish to express their sincere gratitude to the referee for the
%several comments, corrections and valuable suggestions.


\begin{thebibliography}{20}
\bibitem{Caminha} A. Caminha: \textit{On spacelike hypersurfaces of constant
sectional curvature lorentz manifolds}, J. of Geometry and Physics, \textbf{56}
(2006), 1144--1174.

\bibitem{CST} C. Chen, H. Sun and L. Tang: \textit{On translation Hypersurfaces
with Constant Mean Curvature in $(n+1)$-Dimensional Spaces}, J. Beijing Inst.
of Tech., \textbf{12} ($n.3$) (2003), 322--325.

\bibitem{Dillen} F. Dillen, L. Verstraelen and G. Zafindratafa: {\em A
generalization of the translational surfaces of Scherk}, Differential Geometry
in honor of Radu Rosca, Catholic University of Leuven - Belgium (1991),
107--109.

\bibitem{MLeite} M. L. Leite: \textit{An example of a triply periodic complete
embedded scalar-flat hypersurface of $\Bbb{R}^{4}$}, Anais da Academia Brasileira
de Ci\^{e}ncias, \textbf{63} (1991), 383--385.

\bibitem{LSS} B. P. Lima, N. L. Santos and P. A. Sousa: \textit{Translation
Hypersurfaces with Constant Scalar Curvature into the Euclidean Space}, Israel
J. Math., to appear in Israel J. Math..

\bibitem{Liu} H. Liu: \textit{Translation Surfaces with Constant Mean Curvature
in 3-Dimensional Spaces}, J. Geom. \textbf{64} (1999), 141--149.

\bibitem{Lopez} R. L\'{o}pez: \textit{Minimal translation surfaces in
hyperbolic space}, Beitr. Algebra Geom. \textbf{52} (2011), 105--112.

\bibitem{Scherk} H. F. Scherk: \textit{Bemerkungen $\ddot{u}$ber die kleinste
Fl$\ddot{a}$che innerhalb gegebener Grenzen}, J. Reine Angew. Math. \textbf{13}
(1835), 185--208.

\bibitem{Seo} K. Seo: \textit{Translation Hypersurfaces  with Constant
Curvature in Space Forms}, Osaka J. Math., \textbf{50} (2013), 631--641.

\end{thebibliography}
\end{document}